\newcommand{\N}{\mathbb{N}}
\newtheorem{theorem}{Theorem}[section]
\newtheorem{lemma}[theorem]{Lemma}
\newtheorem{corollary}[theorem]{Corollary}
\newtheorem{remark}[theorem]{Remark}
\newtheorem{example}[theorem]{Example}
\theoremstyle{definition}
\newtheorem{definition}[theorem]{Definition}
\title{On a new formula for the number of unrestricted partitions}
\author[Hemar Godinho and Jos\'e Pl\'inio O. Santos]{} 
\thanks{$^{*}$ Corresponding Author}
\begin{document}
\subjclass[2010]{11P81, 05A19}
\keywords{Partitions,  Matrix Representation, Partition Identities}
\maketitle
\centerline{\scshape Hemar Godinho$^{*}$}
\medskip
{\footnotesize
 \centerline{Departamento de Matem\' atica, Universidade de Bras\'{i}lia,  }
   \centerline{Bras\'{i}lia-DF, 70910-900, Brazil}
   \centerline{hemar@unb.br}
} 
\medskip
\centerline{\scshape Jos\'e Pl\'inio O. Santos}
\medskip
{\footnotesize
 \centerline{Departamento de Matem\' atica Aplicada - IMECC, Universidade de Campinas, }
   \centerline{Campinas-SP,  13083-859, Brazil}
   \centerline{josepli@ime.unicamp.br}
\begin{abstract}

 In this paper we  present a new formula  for the number of unrestricted partitions of $n$. We do this by introducing a 
  correspondence between the number of  unrestrited partitions of $n$  and the number of non-negative  solutions of systems of two equations, involving natural  numbers in the interval (1 $,n^{2}$). 
  
\end{abstract}
\section{Introduction}

The relation between partitions and two-line matrices dates back to 1900 with the work of Frobenius\cite{F}. These ideas were further developed by Andrews\cite{A}, who presented a relation between these matricial representations and the elliptic theta functions. A new approach was introduced in Mondek-Ribeiro-Santos\cite{M},  also relating partitions and two-line matrices, but now adding an important feature to this matricial representation, since the conjugate of the partition can also be read from the matrix representation.  This relation is described in  details in Brietzke-Santos-Silva\cite{B11, B13}. This theory was extended in Matte-Santos\cite{Ma}, where it is presented a correspondence between these matrices, paths in the Cartesian plane  and partitions into distinct odd parts all greater than one. These ideas are described in the next section.

Our starting point is the following important  theorem  proved in  \cite{M}.
\begin{theorem}\label{unr}
The number of unrestricted partitions of $n$ is equal to the number of two-line matrices 
\begin{equation}\label{mat}
\left ( \begin{array}{llll}
c_{1} & c_{2} & \cdots & c_{s} \\
d_{1}&d_{2} & \cdots & d_{s} 
\end{array}\right ),  \mbox{ with} \;\; c_{i},d_{j}\in \N\cup\{0\}, \;\;\mbox{for}\;\; 1\leq i,j\leq s.
\end{equation}
 such that 
\begin{equation}\label{C}
c_{s} = 0,\; d_{s}\neq 0, \;\; c_{j}= c_{j+1} + d_{j+1}, \;\;\mbox{for}\;\; \jmath = 1,2,\ldots, s-1, 
\end{equation}
and the sum of all entries is equal to  $n$.
\end{theorem}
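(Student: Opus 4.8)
The plan is to exhibit an explicit weight-preserving bijection between the two sets, in which the bottom row of the matrix records the multiplicities of the parts of the partition. First I would observe that the defining recurrence determines the top row completely in terms of the bottom row: starting from $c_s = 0$ and iterating $c_j = c_{j+1} + d_{j+1}$ downward yields
\[
c_j = \sum_{k=j+1}^{s} d_k, \qquad j = 1, \ldots, s.
\]
In particular every $c_j$ is a non-negative integer, so the non-negativity constraint on the top row is automatic once the $d_k$ are fixed, and a matrix of the prescribed shape is specified by its bottom row alone.

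Next I would compute the total weight. Since each $d_k$ (for $k \geq 2$) contributes to $c_j$ exactly for $j = 1, \ldots, k-1$, summing the top row gives $\sum_{j} c_j = \sum_{k} (k-1) d_k$, and therefore the sum of all entries is
\[
\sum_{j=1}^s c_j + \sum_{j=1}^s d_j = \sum_{k=1}^s (k-1) d_k + \sum_{k=1}^s d_k = \sum_{k=1}^s k\, d_k.
\]
This identity is the heart of the argument: such a matrix has total weight $n$ precisely when $\sum_k k\, d_k = n$, which is exactly the condition that there exist a partition of $n$ having $d_k$ parts equal to $k$ for each $k$.

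With this in hand the bijection is immediate. To a matrix satisfying \eqref{C} of weight $n$ I associate the partition of $n$ with $d_k$ parts equal to $k$; conversely, to a partition of $n$ with largest part $s$ and with $d_k$ parts equal to $k$ (so that $d_s \neq 0$), I assign the matrix whose bottom row is $(d_1, \ldots, d_s)$ and whose top row is given by the formula for $c_j$ above. By construction this matrix satisfies $c_s = 0$, the recurrence \eqref{C}, and has weight $n$, and the two assignments are visibly mutually inverse.

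The step requiring the most care is verifying that the correspondence is genuinely bijective rather than merely surjective, and here the clause $d_s \neq 0$ is exactly what is needed. Dropping it would allow one to pad any admissible matrix with extra trailing columns having $d = 0$ (forcing $c = 0$ as well by the recurrence), all of which represent the same partition. I would therefore emphasize that requiring $d_s \neq 0$ pins down $s$ as the largest part and makes the length $s$ a function of the partition, which is the only delicate point in establishing injectivity; the remaining verifications are the routine computations indicated above.
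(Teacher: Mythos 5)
Your argument is correct, but the bijection you construct is genuinely different from the one the paper uses. The paper does not reprove the theorem (it quotes it from Mondek--Ribeiro--Santos \cite{M}), but the correspondence it works with, as spelled out in the $(6,5,2,2)$ example, takes the number of columns $s$ to be the number of parts and reads the parts as column sums: $\lambda_j = c_j + d_j$, equivalently $c_j = \lambda_{j+1}$ and $d_j = \lambda_j - \lambda_{j+1}$. You instead take $s$ to be the largest part, read multiplicities off the bottom row ($d_k$ being the number of parts equal to $k$), and observe that the top row is then forced, $c_j = \sum_{k>j} d_k$ being the number of parts exceeding $j$. The two readings are exchanged by conjugation --- your matrix for $\lambda$ is exactly the paper's matrix for the conjugate partition $\lambda'$ --- so both are bijections and both prove the counting statement. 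As a standalone proof yours is arguably cleaner: the matrix is determined by its bottom row, the weight identity $\sum_j (c_j + d_j) = \sum_k k\, d_k$ reduces everything to multiplicity vectors, and the role of $d_s \neq 0$ (forbidding padding by zero columns) is isolated at the one point where injectivity could fail. What the paper's reading buys is what its later sections require: the assertion that $s$ corresponds to the number of parts, and the proof of Lemma \ref{PP}, where $d_1 = 0$ is equated with $\lambda_1 = \lambda_2$, are true for the column-sum bijection but false for yours (under your bijection $d_1 = 0$ says that no part equals $1$). Incidentally, the coexistence of both readings in a single matrix is precisely the ``important feature'' the introduction credits to this representation: the conjugate of the partition can also be read from the matrix.
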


Let $\mathcal{M}$ be the set of all matrices given in \eqref{mat} with the restrictions given in \eqref{C}. Theorem \ref{unr} above establishes an 1-1 correspondence between $\mathcal{M}$ and the set of all unrestricted partitions. In this  bijection,   the number $s$ of columns of the matrix in $\mathcal{M}$ corresponds to the
number of parts of the partition. Consider, for example, the partition $\lambda = (6, 5, 2, 2)$ of 15. This partition is associated to a $2\times4$ matrix $A\in \mathcal{M}$  in the following way: we have no choice for the
fourth column, but to pick $c_4= 0$ and $d_4 = 2$. Since $c_3$ must be 2 and the entries
of the third column must add up to 2, then $d_3$ = 0. By the same argument, we
must have $c_2 = 2$ and $d_2 = 3$. Also, $c_1 = 5$ and $d_1 = 1$. To go from the matrix to the partition, we only have to add the entries in each column, hence the correspondence is
\begin{center} 
$$\lambda=6+5+2+2 \;\; \longleftrightarrow \;\; \left( \begin{array}{cccc} 5&2&2&0 \\ 1&3&0&2 \end{array} \right)$$
\end{center} 

In the example below we present explicitly the correspondence between matrices of $\mathcal{M}$ and the unrestricted partitions of $n=5$.
\begin{example}
	\label{ex_matrices_6}
	For $n=5$ we have $p(5)=7$, and so there are $7$ matrices in $\mathcal{M}$ satisfying Theorem \ref{unr}, shown in Table \ref{table_ex_p(5)}.
	\begin{table}[H]
		\centering
		{\small
			\begin{tabular}{cc|cc}
				\hline\noalign{\smallskip}
				\multirow{2}{*}{Partition of $5$} & \multirow{2}{*}{Matrix in $\mathcal{M}$}  & \multirow{2}{*}{Partition of $5$} & \multirow{2}{*}{Matrix in $ \mathcal{M}$}\\
				&  \\
				\noalign{\smallskip}\hline\noalign{\smallskip}
				\vspace{0.1cm}  $(1,1,1,1,1)$ &  $\left( \begin{array}{cccccc} 1&1&1&1&0 \\ 0&0&0&0&1 \end{array} \right)$ & $(3,2)$ &  $\left( \begin{array}{cc} 2&0 \\ 1&2 \end{array} \right)$ \\
				\vspace{0.1cm}  $(2,1,1,1)$ &  $\left( \begin{array}{ccccc} 1&1&1&0 \\ 1&0&0&1 \end{array} \right)$ & $(4,1)$ &  $\left( \begin{array}{ccc} 1&0 \\ 3&1 \end{array} \right)$ \\ 
				\vspace{0.1cm}  $(2,2,1)$ &  $\left( \begin{array}{cccc} 2&1&0 \\ 0&1&1 \end{array} \right)$  &  $(5)$ &  $\left( \begin{array}{cc} 0 \\ 5 \end{array} \right)$ \\ 
				\vspace{0.1cm}  $(3,1,1)$ &  $\left( \begin{array}{cccc} 1&1&0 \\ 2&0&1 \end{array} \right)$  &  &   \\ 
				\noalign{\smallskip}\hline
			\end{tabular}
		}
		\caption{\footnotesize{Table for Example \ref{ex_p(6)}}}
		\label{table_ex_p(5)}
	\end{table} 
	\label{ex_p(6)}
\vspace{-0.5cm}
\end{example}
 In this paper we want to present a new  correspondence between the number of  unrestrited partitions of $n$  and the number of non-negative  solutions of systems of two equations, involving natural  numbers in the interval (1 $,n^{2}$). In order to reach this goal we need first to present  a procedure described in \cite{Ma}, relating unrestricted partitions with paths in the cartesian plane, and partitions of numbers in the interval  $(1 ,n^{2})$  into distinct odd parts all greater than 1. Then we show that these numbers can also be partitioned into squared parts, and finally present the relation of these partitions with systems of two equations.
 
\section{The Path Procedure}

We associate each matrix of $\mathcal{M}$, with at least two columns, to a path built through the Cartesian plane, connecting the point $Q=(\sum_{i=1}^{s} d_i,\sum_{i=1}^{s-1} c_i)$ (for $c_{s}=0$) in the line $x+y=n$ to the origin $(0,0)$. We choose the second line of the matrix to be associated to the $x$-axis, and the first line to be associated to the $y$-axis. 
The path consists of  the following sequence of points:
\begin{equation}\label{path}
\begin{array}{l}
Q=(\displaystyle\sum_{i=1}^{s} d_i,\sum_{i=1}^{s-1} c_i)\rightarrow (\sum_{i=1}^{s-1} d_i,\sum_{i=1}^{s-1} c_i)\rightarrow (\sum_{i=1}^{s-1} d_i,\sum_{i=1}^{s-2} c_i)\rightarrow \cdots \\
 \rightarrow (d_{1}+d_{2}, c_{1}+c_{2}) \rightarrow (d_1+d_2,c_1) \rightarrow (d_1,c_1)\rightarrow (d_1,0)\rightarrow (0,0). \end{array}
\end{equation}

\begin{example}
	For the partition   $(2,2,1,1)$ of $n=6$ we associate  the matrix
	$$M=\left( \begin{array}{cccc} 2&1&1&0 \\ 0&1&0&1 \end{array} \right)$$
and the path  $(2,4)\rightarrow (1,4)\rightarrow (1,3)\rightarrow (1,2)\rightarrow (0,2) \rightarrow (0,0).$
The Figure below describes the process of obtaing the path for this particular matrix $M$.
	\begin{figure}[!htb]
		\centering
		\includegraphics[width=6cm]{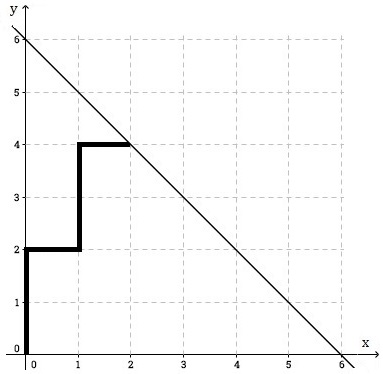}
		\caption{\footnotesize{Illustration for Example \ref{ex_partition_path}}}
		\label{fig_ex_partition_path}
	\end{figure}
	\label{ex_partition_path}
\end{example}

Now we reflect the path through the line $x+y=n$ and create a partition of a number $m$  into distinct odd parts all greater than 1, by taking hooks of the following sizes:
\begin{equation}\label{hooks}
\begin{array}{ccl}
\lambda_{1} & = &2(n-d_{1}) )-1, \\
\lambda_{2} & = & 2((n-d_{1})  -1)-1,   \\
&\vdots & \\
\lambda_{c_{1}} & = &  2((n-d_{1})-(c_{1} -1))-1,  \\
\lambda_{c_{1} + 1} & = & 2((n-d_{1})- d_{2} - c_{1})-1,  \\
&\vdots & \\
\lambda_{c_{1}+ c_{2}} & = & 2((n-d_{1})- d_{2} - c_{1} - (c_{2}-1))-1,  \\
\lambda_{c_{1}+ c_{2} + 1} & = & 2((n-d_{1})- d_{2} - d_{3} -  c_{1} - c_{2})-1, \\
&\vdots & \\
\lambda_{c_{1}+ c_{2} + \cdots + c_{s-1}} & = & 2((n-d_{1})- (\sum_{i=2}^{s-1} d_{i}) -\cdots  - (\sum_{j=1}^{s-1} c_{j}) +1)-1,
\end{array}\end{equation}
in this case  $m = \lambda_{1}+\cdots + \lambda_{c_{1}+\cdots + c_{s-1}}$.  This procedure of associating to a matrix a number $m$ partitioned into distinct odd parts all greater than one is called {\it The Path Procedure}. 
\begin{remark}\label{m<n2}
Since these hooks are all inside a square of side length equals to $n$,   for any  $\jmath$,  we have  $\lambda_{j}\leq 2n-1$ and $m\leq n^{2}-1$.
\end{remark}

\begin{example}
	For the matrix
	$M=\left( \begin{array}{cccc} 2&1&1&0 \\ 0&1&0&1 \end{array} \right), $
	associated to the partition $(2,2,1,1)$ of $n=6$, the hooks given by the reflection of the path through the line $x+y=6$ provide the parts $\lambda_{1}=11$,   $\lambda_{2}=9$,  $\lambda_{3}=5,$  and  $\lambda_{4}=3$ (see \eqref{hooks} above), a partition of $m=28$. Figure \ref{fig_ex_partition_hooks} below helps to understand the process.
	\begin{figure}[!htb]
		\centering
		\includegraphics[width=6cm]{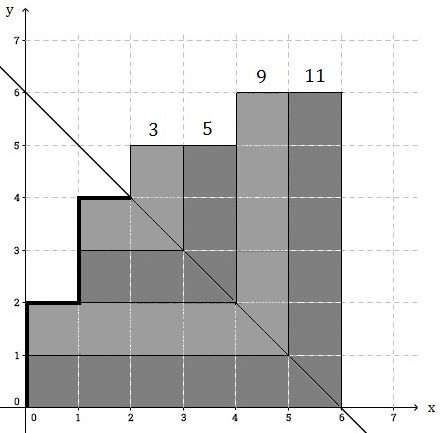}
		\caption{\footnotesize{Illustration for Example \ref{ex_partition_hooks}}}
		\label{fig_ex_partition_hooks}
	\end{figure}
	\label{ex_partition_hooks}
\end{example}
\begin{remark}\label{R1}
A closer look at the expressions of the parts $\lambda_{j}$ in \eqref{hooks} reveals that in all of them we have the term $(n-d_{1})$,  which is equal to $2c_{1}+c_{2}+\cdots + c_{s-1}$, according to the restritions stated in \eqref{C}. Therefore the term $d_{1}$ plays no role in the procedure of obtaining these parts. On the other hand, it is clear that each matrix associated to a different partition of a fixed number  $n$ generates a different path from the line $x+y=n$ to the origin $(0,0)$, which in turn  leads to a  different partition into distinct odd parts. These comments are illustrated in the two examples below. The  example \ref{ex_6_4} highlights the fact that matrices of $\mathcal{M}$ which only differs from  each other at the $d_{1}$ entry produces the same partitions into distinct odd parts, and example \ref{ex-p(5)}  presents the relation between the matrices associated to the partitions of $n=5$ and the partitions into distinct odd parts originated from the {\it Path Procedure}.
\end{remark}
\begin{example}
\label{ex_6_4}
Let us take the partition $(4,1,1)$ of $6$ and the partition $(2,1,1)$ of $4$. The matrices associated to them are, respectively,
$$M_{1}=\left( \begin{array}{ccc} 1&1&0 \\ 3&0&1 \end{array} \right) \mbox{ and }\; M_{2}=\left( \begin{array}{ccc} 1&1&0 \\ 1&0&1 \end{array} \right).$$

The paths that these matrices originate are different, although both paths induce the same hooks and, therefore, the same partition into distinct odd parts, as shown in Figure \ref{fig_ex_partition_hooks_6_4}.
\begin{figure}[!htb]
\centering
\includegraphics[width=12cm]{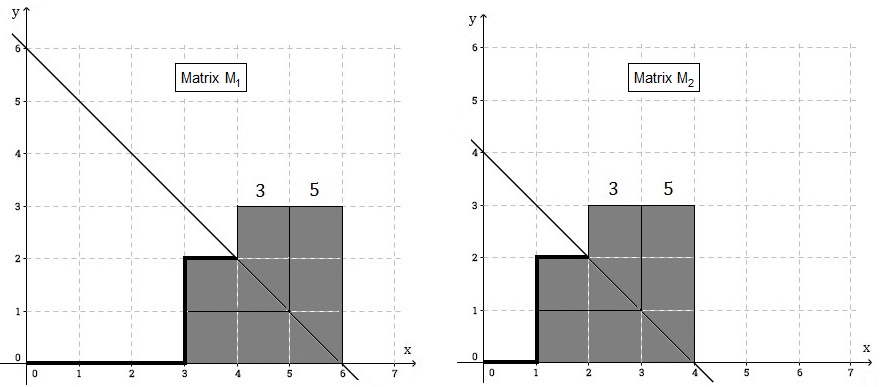}
\caption{\footnotesize{Illustration for Example \ref{ex_partition_hooks_6_4}}}
\label{fig_ex_partition_hooks_6_4}
\end{figure}
\label{ex_partition_hooks_6_4}
\end{example}

\begin{example}\label{ex-p(5)}
	 In the table below we present the result of the application of the {\it Path Procedure} to  the matrices  listed in   Table \ref{table_ex_p(5)}, related to the partitions of $n=5$.
	\begin{table}[H]
		\centering
		{\small
			\begin{tabular}{cc|cc}
				\hline\noalign{\smallskip}
				\multirow{2}{*}{Matrix in $\mathcal{M}$}  & \multirow{2}{*}{ \begin{tabular}{@{}c@{}} Partition into \\ [3pt] distinct odd parts \end{tabular}}  & \multirow{2}{*}{Matrix in $\mathcal{M}$}  & \multirow{2}{*}{ \begin{tabular}{@{}c@{}} Partition into \\ [3pt] distinct odd parts \end{tabular}} \\
				&  \\
				\noalign{\smallskip}\hline\noalign{\smallskip}
				\vspace{0.1cm}     $M_{1}=\left( \begin{array}{cccccc} 1&1&1&1&0 \\ 0&0&0&0&1 \end{array} \right)$& (9,7,5,3) &   $M_{5}=\left( \begin{array}{cc} 2&0 \\ 1&2 \end{array} \right)$ & (7,5) \\
				\vspace{0.1cm}   $M_{2}=\left( \begin{array}{ccccc} 1&1&1&0 \\ 1&0&0&1 \end{array} \right)$ & (7,5,3) &   $M_{6}=\left( \begin{array}{ccc} 1&0 \\ 3&1 \end{array} \right)$ & (3) \\ 
				\vspace{0.1cm}  $M_{3}=\left( \begin{array}{cccc} 2&1&0 \\ 0&1&1 \end{array} \right)$& (9,7,3)  &   $M_{7}=\left( \begin{array}{cc} 0 \\ 5 \end{array} \right)$ & $\emptyset$ \\ 
				\vspace{0.1cm}    $M_{4}=\left( \begin{array}{cccc} 1&1&0 \\ 2&0&1 \end{array} \right)$  & (5,3)&  &   \\ 
				\noalign{\smallskip}\hline
			\end{tabular}
		}
		\caption{\footnotesize{Table for Example \ref{ex-p(5)}}}
		\label{table_ex_p(5-1)}
	\end{table} 
\end{example}

In light of the comments above, especially the ones in Remark \ref{R1}, we may define a function $P:\mathcal{M} \longrightarrow \N$, defined as $P(M)=m$, where $m$ is obtained by the application of the Path Procedure to the matrix $M$. From Table \ref{table_ex_p(5-1)}, we have that 
$$P(M_{1})= 24, \; P(M_{2})=15, \;P(M_{3})= 19, \; P(M_{4})=8, \; P(M_{5})= 12 , \;P(M_{6})=3. $$
These numbers are all distinct, but clearly this happens only for small numbers otherwise the number of unrestricted partitions of $n$ would be limited by $n^2$. Observe that if we take the matrices
\begin{equation}\label{conv}
A=\left( \begin{array}{cccc} 4&1&1&0 \\ 0&3&0&1 \end{array} \right) \mbox{ and }\; B=\left( \begin{array}{ccc} 3&3&0 \\ 0&0&3 \end{array} \right),
\end{equation}
we obtain $P(A)= P(B)=72$ and it is easy to verify that $5,6,9,10 \not \in Im(P)$. This shows that the function $P$ is neither injective nor surjective,  in fact there are infinitly many values of $m\in\N$ not in the image of $P$, as we shall see shortly. If $m\in Im(P)$, let us call the size of the set $P^{-1}(m)$ the frequency of $m$, and  denote it by $f(m)$. In \cite{Ma} one can find tables for the frequencies corresponding to all the partitions of $4, 5, 8$ and $21$.  In particular, when considering  $n=21$, they have obtained $f(324)=8$, $f(291)=9$ and $f(312)=10$. The number $f(m)$ is always finite, for it is smaller than the number of partitions of $m$ into distinct odd parts.

\section{$t$-squared Partitions}
\begin{definition}\label{t-sqrt}
Let $m\in \N$. We say that $m$ admits a $t$-{\it squared partition} if we can partition  $m$ into $2t+1$ squared parts as
\begin{equation}\label{part}
m = b^2 +c_{1}^{2}+c_{2}^{2}+\cdots + c_{t}^{2} + c_{1}^{2}+ c_{2}^{2}+\cdots + c_{t}^{2},
\end{equation}
where $b= c_{1}+c_{2}+\cdots + c_{t}$ and $c_{1},\ldots,c_{t}\in\N$.
\end{definition}
It follows trivially from the definition that $m=b^{2} + 2a$, where $a= c_{1}^{2} + \cdots + c_{t}^{2}$. As an example we have that $83$ admits a $3$-squared partition since  
$$ 83= (3+2+2)^2 +2(3^2+2^2+2^2).$$

\begin{theorem}\label{Teo-sq}
 Let $m\in\N$. Then $m\in Im(P)$ if, and only if, $m$ admits a $t$-squared partition.
\end{theorem}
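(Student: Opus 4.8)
The plan is to collapse the entire statement to a single closed-form evaluation of the function $P$. Concretely, I would first prove the identity
\[
P(M)=\Big(\sum_{k=1}^{s-1}c_k\Big)^{2}+2\sum_{k=1}^{s-1}c_k^{2}
\]
for every $M\in\mathcal{M}$ having $s\ge 2$ columns. Granting this, both implications of the theorem drop out quickly, so essentially all the work sits in establishing this formula.

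To prove the identity I would sum the parts listed in \eqref{hooks} group by group, where the $k$-th group is the block of $c_k$ consecutive parts $\lambda_j$ attached to the $k$-th column. Inspection of \eqref{hooks} shows that each such block is a run of $c_k$ consecutive odd numbers whose largest member equals $2B_k-1$, with
\[
B_k=(n-d_1)-\sum_{i=2}^{k}d_i-\sum_{i=1}^{k-1}c_i .
\]
Invoking Remark \ref{R1}, which gives $n-d_1=2c_1+c_2+\cdots+c_{s-1}$, together with the telescoping identity $\sum_{i=2}^{k}d_i=c_1-c_k$ forced by \eqref{C}, this reduces to the clean expression $B_k=2c_k+c_{k+1}+\cdots+c_{s-1}$. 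Since an arithmetic run of $c_k$ odd numbers with largest term $2B_k-1$ has sum $c_k(2B_k-c_k)=2c_kB_k-c_k^{2}$, summing over $k$ and collapsing the double sum via $\sum_k c_k\sum_{i\ge k}c_i=\tfrac12\big[(\sum_k c_k)^{2}+\sum_k c_k^{2}\big]$ yields exactly the displayed formula. This computation is the main obstacle: it is entirely elementary, but it hinges on the simplification of $B_k$ and on careful bookkeeping of the nested sums.

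With the formula in hand, the forward implication is immediate. First, \eqref{C} together with $d_s\neq 0$ forces $c_k\ge 1$ for every $k\le s-1$: were some $c_k=0$, the recurrence would propagate $c_j=d_j=0$ for all $j>k$, contradicting $d_s\neq 0$. Hence, for $m=P(M)$, the identity displays $m$ as a $t$-squared partition with $t=s-1$, $b=\sum_{k=1}^{s-1}c_k$ and parts $c_1,\dots,c_{s-1}\in\N$.

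For the converse, suppose $m=b^{2}+2\sum_{i=1}^{t}a_i^{2}$ with $b=\sum_{i=1}^{t} a_i$ and $a_i\in\N$. As the expression is symmetric in the $a_i$, I may assume $a_1\ge a_2\ge\cdots\ge a_t\ge 1$. I then construct $M\in\mathcal{M}$ with $s=t+1$ columns by setting $c_k=a_k$ for $1\le k\le t$, $c_{t+1}=0$, $d_1=0$, $d_k=a_{k-1}-a_k$ for $2\le k\le t$, and $d_{t+1}=a_t$. Monotonicity of the $a_i$ gives $d_k\ge 0$ and verifies the recurrence $c_j=c_{j+1}+d_{j+1}$ of \eqref{C}, while $d_{t+1}=a_t\ge 1$ secures $d_s\neq 0$; thus $M\in\mathcal{M}$. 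Applying the identity to this $M$ gives $P(M)=b^{2}+2\sum_i a_i^{2}=m$, so $m\in Im(P)$, completing the plan.
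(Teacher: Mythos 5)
Your proposal is correct, and it pivots on exactly the identity the paper itself establishes as \eqref{m-sq}, namely $P(M)=(c_1+\cdots+c_{s-1})^2+2(c_1^2+\cdots+c_{s-1}^2)$; the difference lies in how you reach that identity and in how much of the two implications you make explicit. The paper derives \eqref{m-sq} geometrically: $m$ is the area of the region cut out by the reflected path, and that region decomposes into a large square of side $y_0=c_1+\cdots+c_{s-1}$ together with twice the $(s-1)$ rectangles $c_1\times x_0$, $c_2\times(x_0-d_2)$, \ldots, which the relations \eqref{C} force to be squares. You instead sum the hook lengths of \eqref{hooks} algebraically: each column contributes a run of $c_k$ consecutive odd numbers with maximum $2B_k-1$, and the telescoping $\sum_{i=2}^{k}d_i=c_1-c_k$ plus your double-sum identity collapse the total to the same expression --- a purely computational derivation of the same area count, and your simplification $B_k=2c_k+c_{k+1}+\cdots+c_{s-1}$ checks out. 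Your write-up is also more complete on two points the paper treats as immediate: you prove that $c_k\geq 1$ for all $k\leq s-1$ (by propagating $c_k=0$ through the recurrence to contradict $d_s\neq 0$), which is what makes the parts legitimate elements of $\N$ as required by Definition \ref{t-sqrt}; and for the converse --- where the paper only says the correspondence is seen ``immediately'' --- you construct the matrix explicitly, observing that after sorting $a_1\geq\cdots\geq a_t$ the choices $c_k=a_k$, $d_k=a_{k-1}-a_k$, $d_{t+1}=a_t$ satisfy \eqref{C} with nonnegative entries (the sorting being precisely what guarantees $d_k\geq 0$). The paper's geometric argument is shorter and makes the appearance of squares visible at a glance, and it is the form of the argument that Corollary \ref{Cor} reuses; yours trades the picture for bookkeeping but leaves fewer gaps for the reader to fill.
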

\begin{proof}
Let $M\in \mathcal{M}$,  written as   $M=\left ( \begin{array}{lllll} c_{1} & c_{2} & \cdots &c_{s-1}& 0 \\ 0&d_{2} & \cdots &d_{s-1}& d_{s} 
\end{array}\right ),$  such that $P(M)=m$. Once we have the  matrix $M$ we  associate to it the  path from $Q=(x_{0},y_{0})$ to the origin, where $x_{0}=\sum_{i=2}^{s}d_{i}$ and $y_{0}=\sum_{j=1}^{s-1} c_{j}$ (see \eqref{path}). Observe that, starting at $Q$ we obtain the next point moving horizontally to the left $d_{s}$ units. To proceed to the next point we move vertically downwards $c_{s-1}$ units. Continuing this process we obtain the following points by sequence of horizontal moves to the left  $d_{j}$ units and vertical moves downwards $c_{j}$ units. Eventually, after moving $d_{2}$ units horizontally to the left we reach the point $(0,c_{1})$, and  after moving $c_{1}$ units vertically downwards we reach the origin. It is clear from this construction that there exists  an 1-1 correpondence between this  path and matrix $M$.  The figure below illustrates this relationship.
\begin{figure}[!htb]
\centering
\includegraphics[width=12cm]{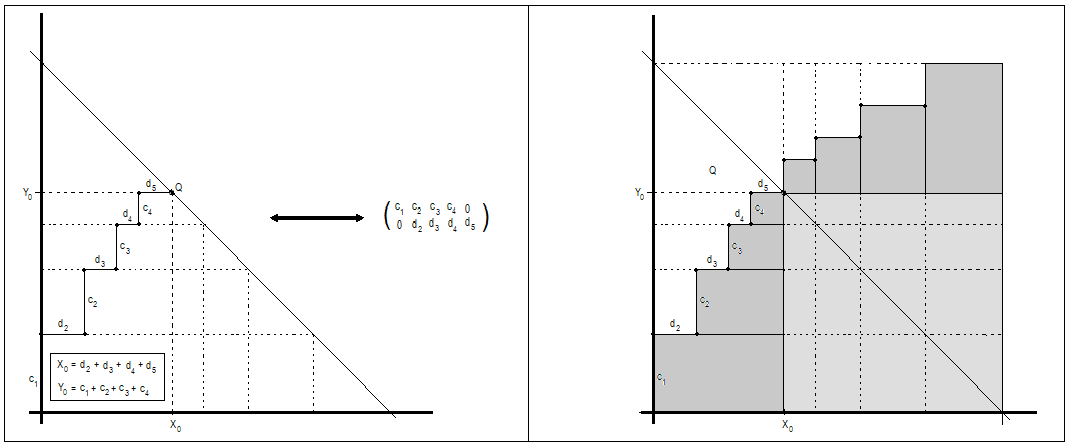}
\caption{}
\label{table4}
\end{figure}
Once we have the path, we can apply the Path Procedure to obtain the hooks and the partition into distinct odd parts, all greater than 1, of  the number $m$, which  is equal to the area of the figure, the sum of the parts (we have here a similar situation as  described in Figures \ref{fig_ex_partition_path} and \ref{fig_ex_partition_hooks}). 
 This area is also equal to the sum of the  area of the big square of size length equal to $y_{0}$ with twice the sum of the areas of the $(s-1)$ rectangles of areas $c_{1}\times x_{0}$, $c_{2}\times (x_{0}-d_{2})$, \ldots , $c_{s-2}\times (d_{s-1}+d_{s})$ and $c_{s-1}\times d_{s}$ respectively (see Figure \ref{table4} above). But due to the restrictions stated in \eqref{C}, we have that $x_{0}= c_{1},\; (x_{0}-d_{2})= c_{2},\; (x_{0}-d_{2}-d_{3}) = c_{3},\; \ldots,\; (d_{s-1}+ d_{s})=c_{s-2}$ and $d_{s}=c_{s-1}$.  Thus, in fact, these $(s-1)$ rectangles are $(s-1)$ squares, and we have
\begin{equation}\label{m-sq}
m = (c_{1}+\cdots + c_{s-1})^{2} + 2 (c_{1}^{2} + \cdots + c_{s-1}^{2}).
\end{equation}
Comparing this partition with the matrix $M$ we see immediatly the correspondence between them.
\end{proof}
The next corollary follows immediatly from the arguments given in the proof of Theorem \ref{Teo-sq}.
\begin{corollary}\label{Cor}
Let $\mathcal{M}_{0}$ be the subset of  $\mathcal{M}$ containing all matrices with the entry $d_{1}=0$, let $\mathcal{O}$ be the set of all partitions into distinct odd parts greater than 1 generated by the {\it Path Procedure}, and let $\mathcal{T}$  be the set of all $t$-squared partitions. There is an 1-1 correspondence between the sets $\mathcal{M}_{0}$,  $\mathcal{O}$ and  $\mathcal{T}$.
\end{corollary}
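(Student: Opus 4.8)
The plan is to realize all three sets as encodings of one concrete family and then compose. The proof of Theorem~\ref{Teo-sq} already supplies the hard part: it produces an explicit $1$-$1$ correspondence between a matrix $M\in\mathcal{M}$ written with $d_{1}=0$ (that is, an element of $\mathcal{M}_{0}$) and its path, and between that path and the $t$-squared partition \eqref{m-sq}. So I would take as the common coordinate the sequence $(c_{1},\ldots,c_{s-1})$ read off the first row of $M$ after deleting the forced trailing $0$, and show that each of $\mathcal{M}_{0}$, $\mathcal{O}$ and $\mathcal{T}$ is faithfully encoded by it; chaining the encodings then gives the three stated bijections.

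First I would pin down $\mathcal{M}_{0}\leftrightarrow\{\text{sequences}\}$. With $d_{1}=0$ and $c_{s}=0$ fixed, the relations \eqref{C} read $d_{j+1}=c_{j}-c_{j+1}$, so the whole matrix is reconstructed from $(c_{1},\ldots,c_{s-1})$; conversely non-negativity of the $d_{j}$ together with $d_{s}\neq 0$ forces exactly $c_{1}\ge c_{2}\ge\cdots\ge c_{s-1}\ge 1$, and every such sequence returns a legitimate matrix of $\mathcal{M}_{0}$. Thus $\mathcal{M}_{0}$ corresponds bijectively to weakly decreasing sequences of positive integers (the one-column matrices, giving the empty partition, fall outside $\mathcal{M}_{0}$ and may be set aside).

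Next, $\mathcal{M}_{0}\leftrightarrow\mathcal{T}$ is read straight from \eqref{m-sq}: the forward map sends $(c_{1},\ldots,c_{s-1})$ to the $t$-squared partition with $t=s-1$, $b=c_{1}+\cdots+c_{s-1}$ and doubled squared parts $c_{i}^{2}$. For injectivity I must recover the structured data $(b;c_{1},\ldots,c_{s-1})$ from the bare partition, and the key observation is that for $t\ge 2$ one has $b=\sum_{i}c_{i}>\max_{i}c_{i}$, so $b^{2}$ is the unique largest part and has multiplicity one; removing it leaves squares of even multiplicity, and halving returns the $c_{i}$. The degenerate case $t=1$ (three equal squares) is singled out but unambiguous, so the map is a bijection onto $\mathcal{T}$.

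Finally, $\mathcal{M}_{0}\leftrightarrow\mathcal{O}$ is the Path Procedure, which is surjective onto $\mathcal{O}$ by definition; the content is injectivity, and this is where I expect the real work. Since $d_{1}=0$ on $\mathcal{M}_{0}$, \eqref{hooks} gives $\lambda_{1}=2(n-d_{1})-1=2n-1$, which by Remark~\ref{m<n2} is the largest part, so the odd partition already reveals $n=(\lambda_{1}+1)/2$; within a fixed $n$, Remark~\ref{R1} guarantees that distinct matrices yield distinct paths and hence distinct odd partitions, and recovering $n$ first upgrades this to global injectivity. The main obstacle is to make the inversion of the hook data in \eqref{hooks} fully rigorous: the runs of consecutive odd parts merge exactly when some $d_{j}=0$, i.e. when $c_{j-1}=c_{j}$, so reading the sequence back off the partition needs care at these coincidences. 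Granting this, composing the two bijections through $\mathcal{M}_{0}$ delivers the $1$-$1$ correspondence among $\mathcal{M}_{0}$, $\mathcal{O}$ and $\mathcal{T}$.
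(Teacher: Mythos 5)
Your proof is correct and is essentially the paper's own: the paper proves Corollary \ref{Cor} in one line by invoking the arguments in the proof of Theorem \ref{Teo-sq}, which is exactly the chain matrix $\leftrightarrow$ path $\leftrightarrow$ hook (odd) partition $\leftrightarrow$ squared decomposition \eqref{m-sq} that you compose, merely re-coordinatized through the sequence $(c_1,\ldots,c_{s-1})$. The extra details you supply --- recovering $b$ and the $c_i$ from the bare multiset of squares (unique largest part $b^2$, even multiplicities elsewhere), and recovering $n$ from the largest hook before citing Remark \ref{R1} for injectivity of the Path Procedure --- are refinements the paper treats as immediate, so your write-up is, if anything, more complete than the original.
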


\begin{example}
	For the matrix
	$A=\left( \begin{array}{cccc} 2&1&1&0 \\ 0&1&0&1 \end{array} \right), $
	associated to the partition $(2,2,1,1)$ of $n=6$,  we have $P(A)=28$, given by the partition $28= 11+9+5+3$ (see Example \ref{ex_partition_hooks}). According to \eqref{m-sq} above and the entries of $A$, we also have $28=(2+1+1)^2 + 2 (2^{2}+1^{2} + 1^{2})$.
\end{example}

Let $M\in \mathcal{M}$,  written as   $M=\left ( \begin{array}{lllll} c_{1} & c_{2} & \cdots &c_{s-1}& 0 \\ d_{1}&d_{2} & \cdots &d_{s-1}& d_{s} 
\end{array}\right ),$  and define $\ell(M)$ to be equal to the sum of all entries of $M$, that is,

$$
 \ell(M)= \sum_{i=1}^{s} d_{i} \; + \;\sum_{j=1}^{s-1} c_{j}.
$$
 It follows from the conditions in \eqref{C}  that
\begin{equation}\label{part2}
\ell(M) = (c_{1}+d_{1}) + c_{1} + c_{2} + \cdots + c_{s-1}.
\end{equation}

\begin{lemma}
Let $m \in \N$, and assume that $m$ admits a $t$-squared partition. Let $M(m)\in  {\mathcal M}_{0}$ be the two-line matrix associated to $m$.
If $\ell(M(m)) \leq n$ then $m\leq n^{2}.$ 
\end{lemma}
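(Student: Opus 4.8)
The plan is to reduce the statement to a short algebraic estimate by feeding the explicit $t$-squared partition obtained in the proof of Theorem \ref{Teo-sq} into the formula \eqref{part2} for $\ell$. First I would invoke Corollary \ref{Cor}: since $m$ admits a $t$-squared partition, the associated matrix $M(m)\in\mathcal{M}_0$ has top row $c_1, c_2, \ldots, c_{s-1}, 0$ with $d_1 = 0$, and by \eqref{m-sq}
\begin{equation*}
m = b^2 + 2\bigl(c_1^2 + \cdots + c_{s-1}^2\bigr), \qquad \text{where } b = c_1 + \cdots + c_{s-1}.
\end{equation*}
Because $d_1 = 0$, formula \eqref{part2} collapses to $\ell(M(m)) = c_1 + b$, so the hypothesis $\ell(M(m)) \leq n$ reads simply $c_1 + b \leq n$.

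Next I would record the monotonicity forced by the restrictions \eqref{C}: since each $d_{j+1}\geq 0$ and $c_j = c_{j+1}+d_{j+1}$, the top-row entries satisfy $c_1 \geq c_2 \geq \cdots \geq c_{s-1}\geq 1$. In particular $c_1$ is the \emph{largest} of the parts, which yields
\begin{equation*}
c_1^2 + \cdots + c_{s-1}^2 \;\leq\; c_1\bigl(c_1 + \cdots + c_{s-1}\bigr) \;=\; c_1 b.
\end{equation*}
This is the one step that genuinely uses the structure of $\mathcal{M}_0$ rather than just the size of $b$; replacing it by the cruder bound $\sum c_j^2 \leq b^2$ would only give $m \leq 3b^2$, which is not enough.

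Substituting the previous inequality into the expression for $m$ and completing the square then finishes everything in one line:
\begin{equation*}
m \;\leq\; b^2 + 2c_1 b \;=\; (c_1 + b)^2 - c_1^2 \;\leq\; n^2 - c_1^2 \;\leq\; n^2 - 1 \;<\; n^2,
\end{equation*}
where I have used $c_1 + b \leq n$ and $c_1 \geq 1$. This in fact recovers the sharper inequality $m \leq n^2 - 1$ already observed in Remark \ref{m<n2}. I do not anticipate a real obstacle: the only point demanding care is the bookkeeping that the $c_1$ appearing in $\ell(M(m)) = c_1 + b$ is exactly the maximal part, so that the bound $\sum c_j^2 \leq c_1 b$ is legitimately available; the remainder is the identity $b^2 + 2c_1 b = (c_1+b)^2 - c_1^2$ combined with the hypothesis.
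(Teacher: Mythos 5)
Your proof is correct, but it takes a genuinely different route from the paper's. The paper disposes of the lemma in one line: by Corollary \ref{Cor} the $t$-squared partition of $m$ corresponds to a matrix of $\mathcal{M}_{0}$, hence to a path and a partition of $m$ into distinct odd parts produced by the Path Procedure; since $\ell(M(m))\leq n$, the hooks of that partition all fit inside a square of side length $n$, and Remark \ref{m<n2} then gives $m\leq n^{2}-1$ directly. In other words, the paper's argument is geometric (total area of hooks inside an $n\times n$ square), whereas yours is algebraic: you substitute the explicit formula \eqref{m-sq} into the expression \eqref{part2} for $\ell$, use the monotonicity $c_{1}\geq c_{2}\geq\cdots\geq c_{s-1}\geq 1$ forced by \eqref{C} (note $c_{s-1}=d_{s}\geq 1$) to get $\sum_{j}c_{j}^{2}\leq c_{1}b$, and complete the square. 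Your bookkeeping is sound: $d_{1}=0$ does give $\ell(M(m))=c_{1}+b$, and the chain $m\leq b^{2}+2c_{1}b=(c_{1}+b)^{2}-c_{1}^{2}\leq n^{2}-1<n^{2}$ is valid, so you even recover the sharp bound of Remark \ref{m<n2}. What the paper's approach buys is brevity, since the geometric content was already established earlier; what yours buys is a self-contained, purely arithmetic verification that never appeals to the picture of hooks, and that makes explicit exactly where the hypothesis $c_{1}+b\leq n$ and the positivity of the parts enter.
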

\begin{proof}
It follows from Corollary \ref{Cor} and Remark \ref{m<n2}.
\end{proof}
The converse is not true as we have seen in  \eqref{conv}, for 
$$
m= 72 = P(A) < 9^{2}\:  \;\mbox{and} \;\;  \ell(A)=10.
$$
\begin{lemma}\label{PP}
Let $n\in \N$ and define $\mathcal{M}_{0}(n)$ to be the subset of $\mathcal{M}_{0}$ containing all  matrices $M$  such that $\ell(M) \leq n$. There is an 1-1 correspondence between the unrestricted partitions of $n$ into at least two parts and the elements of $\mathcal{M}_{0}(n)$.
\end{lemma}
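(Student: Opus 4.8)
The plan is to exploit the fact that the entry $d_1$ plays no role in the defining conditions \eqref{C}: the recurrence $c_j = c_{j+1} + d_{j+1}$ and the boundary conditions $c_s = 0$, $d_s \neq 0$ involve only $c_1, \ldots, c_s$ and $d_2, \ldots, d_s$. Consequently, for any matrix of $\mathcal{M}$ with at least two columns the value of $d_1$ may be replaced by an arbitrary non-negative integer without leaving $\mathcal{M}$. I would first record this observation, together with the remark that a one-column matrix must satisfy $d_1 = d_s \neq 0$, so that $\mathcal{M}_0$ (where $d_1 = 0$) contains only matrices with $s \geq 2$ columns. Under Theorem \ref{unr} the number of columns equals the number of parts, so these correspond exactly to partitions into at least two parts.

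Next I would make the correspondence explicit in both directions. Given $M \in \mathcal{M}_0(n)$, I set $d_1 := n - \ell(M)$. Since membership in $\mathcal{M}_0(n)$ means precisely $\ell(M) \leq n$, this is a legitimate non-negative value, and the resulting matrix $\widetilde{M}$ still lies in $\mathcal{M}$ by the observation above, now with entry-sum exactly $n$ and still $s \geq 2$ columns; by Theorem \ref{unr} it corresponds to a partition of $n$ into $s \geq 2$ parts. In the reverse direction, a partition of $n$ into at least two parts yields, via Theorem \ref{unr}, a matrix $M \in \mathcal{M}$ with entry-sum $n$ and $s \geq 2$ columns; resetting $d_1 := 0$ produces $M_0 \in \mathcal{M}_0$ whose entry-sum drops by the old value of $d_1$, so $\ell(M_0) = n - d_1 \leq n$ and hence $M_0 \in \mathcal{M}_0(n)$.

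Finally I would check that these two maps are mutually inverse. The key bookkeeping is that $\ell$ changes by exactly the amount added to or removed from $d_1$, which is transparent from \eqref{part2}, where $d_1$ occurs only in the single summand $c_1 + d_1$. Starting from $M \in \mathcal{M}_0(n)$, replacing $d_1 = 0$ by $n - \ell(M)$ and then resetting $d_1$ to $0$ returns $M$; starting from a partition, the composite restores the original value $d_1 = n - \ell(M_0)$ and hence the original matrix and partition. I expect the only points needing care to be these two: confirming that $\mathcal{M}_0$ automatically excludes the one-column case, so that ``at least two parts'' is exactly the right count, and observing that the inequality $\ell(M) \leq n$ defining $\mathcal{M}_0(n)$ is precisely the condition under which the remaining amount $n - \ell(M)$ can be loaded onto the free entry $d_1$. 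Neither step is genuinely difficult; the real content of the lemma is this identification of $d_1$ as the single degree of freedom decoupled from \eqref{C}.
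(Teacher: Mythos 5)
Your proposal is correct and is essentially the paper's own argument: the paper constructs the identical bijection, only phrased at the level of partitions---replacing the largest part $\lambda_{1}$ by $\lambda_{2}$ (equivalently, zeroing the entry $d_{1}=\lambda_{1}-\lambda_{2}$) and, conversely, adding $\delta=n-\ell(A)$ to the repeated first part (equivalently, setting $d_{1}=\delta$)---whereas you phrase the same operation directly on the matrix entry $d_{1}$. Your explicit checks (that one-column matrices are excluded from $\mathcal{M}_{0}$, and that the two maps are mutually inverse) are details the paper leaves implicit, but the underlying correspondence is the same.
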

\begin{proof} Initially observe that any matrix $A\in\mathcal{M}_{0}$ is always associated with a partition of $\ell(A)$ of the type $\ell(A) = \mu_{1} + \mu_{2} + \cdots + \mu_{s}$, with $\mu_{1}=\mu_{2} \geq \mu_{3}\cdots \geq \mu_{s}$.
Let $n=\lambda_{1} + \lambda_{2}+ \cdots + \lambda_{r}$ be an unrestricted partition of $n$, with $\lambda_{1} \geq  \lambda_{2}\geq  \cdots \geq  \lambda_{r} \geq 1$, and let $M\in \mathcal{M}$ be  the associated matrix.  If $ \lambda_{1}= \lambda_{2}$ then  $M\in\mathcal{M}_{0}(n)$.  Thus let us assume $ \lambda_{1}\neq \lambda_{2}$. In this case there exists a matrix $M^{*}\in\mathcal{M}_{0}$ corresponding to the partition $\lambda_{2} + \lambda_{2}+ \cdots + \lambda_{r}$, and  since $\ell(M^{*}) = n -(\lambda_{1} - \lambda_{2}) < n$, we have  $M^{*}\in\mathcal{M}_{0}(n)$. On the other hand, given a matrix $A\in\mathcal{M}_{0}(n)$, it is associated with a partitition  $\ell(A) = \mu_{2} + \mu_{2} +\mu_{3}+ \cdots + \mu_{s}$ and $\ell(A) \leq n$. Taking $ \delta = n- \ell(A)$, we will have  $(\mu_{2}+ \delta) + \mu_{2} +\mu_{3}+ \cdots + \mu_{s}$, a partition of $n$,  as desired (observe that the corresponding matrix to this partition will have $d_{1}=\delta$). This establishes the correspondence stated in the lemma.
\end{proof}
\section{The frequency of $m$}

It is easy to see that  a positive integer $m$ admits a $t$-{\it squared partition} if $m$ can be written as $m=b^2 + 2a$, and we can find a solution for the system
\begin{equation}\label{eq}
\left \{\begin{array}{ll}
b= x_{1}+\cdots + x_{t} \\
a= x_{1}^{2}+\cdots + x_{t}^{2},
\end{array} \right .
\end{equation}
with $x_{1},\ldots, x_{t}\in \N$.
\begin{lemma}\label{cond}
Let $m\in\N$  and suppose that $m$ admits  a $t$-{\it squared partition}. Then we can find $a,b\in \N$ such that  $m=b^2 + 2a$  with
$$
a\equiv b\pmod{2} \;\;\;\; \mbox{and}\;\;\; ta \geq b^{2} \geq  a \geq b.
$$
\end{lemma}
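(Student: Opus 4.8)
The plan is to use the very numbers $a$ and $b$ supplied by the $t$-squared partition itself, rather than searching for new ones. By Definition \ref{t-sqrt} there exist $c_{1},\ldots,c_{t}\in\N$ with $b=c_{1}+\cdots+c_{t}$ and $a=c_{1}^{2}+\cdots+c_{t}^{2}$, so that $m=b^{2}+2a$. I would then verify the parity condition and the chain of three inequalities one at a time, each of which reduces to an elementary fact about the $c_{i}$.

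For the parity, I would observe that $c^{2}\equiv c\pmod 2$ for every integer $c$, whence
$$
a=\sum_{i=1}^{t}c_{i}^{2}\equiv\sum_{i=1}^{t}c_{i}=b\pmod 2,
$$
so that $a\equiv b\pmod 2$ holds automatically.

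For the inequalities, I would read off $ta\geq b^{2}\geq a\geq b$ from three standard observations. The rightmost inequality $a\geq b$ follows because each $c_{i}\geq 1$ forces $c_{i}^{2}\geq c_{i}$, and summing gives $\sum c_{i}^{2}\geq\sum c_{i}$. The middle inequality $b^{2}\geq a$ comes from expanding
$$
b^{2}=\Big(\sum_{i=1}^{t}c_{i}\Big)^{2}=\sum_{i=1}^{t}c_{i}^{2}+2\sum_{i<j}c_{i}c_{j}=a+2\sum_{i<j}c_{i}c_{j}\geq a,
$$
since the cross terms are non-negative. Finally, the leftmost inequality $ta\geq b^{2}$ is precisely the Cauchy--Schwarz inequality $\big(\sum c_{i}\big)^{2}\leq t\sum c_{i}^{2}$ applied to the vectors $(c_{1},\ldots,c_{t})$ and $(1,\ldots,1)$.

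I do not anticipate a genuine obstacle here: every assertion collapses to a one-line estimate once the $a,b$ furnished by the definition are in hand. The only point requiring a moment of care is the choice of estimate for $ta\geq b^{2}$, where recognizing the Cauchy--Schwarz form is the cleanest route; one should also note that $a\geq b$ genuinely uses $c_{i}\geq 1$ (it would fail at $c_{i}=0$), a hypothesis guaranteed by $c_{i}\in\N$.
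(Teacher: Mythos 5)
Your proposal is correct and follows essentially the same route as the paper: both take $a=\sum c_i^2$ and $b=\sum c_i$ directly from the definition, get the parity from $x^2\equiv x\pmod 2$, and obtain $ta\geq b^2$ via Cauchy--Schwarz. Your write-up is in fact slightly more explicit than the paper's, which merely asserts $b^2\geq a\geq b$ from $c_i\in\N$ where you spell out the expansion of $\bigl(\sum c_i\bigr)^2$ and the term-by-term bound $c_i^2\geq c_i$.
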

\begin{proof}
As seen above,  since $m$ admits  a $t$-{\it squared partition} there must exist  $a,b\in \N$ such that  $m=b^2 + 2a$, and  a solution $x_{1},\ldots,x_{t}\in \N$ for \eqref{eq}. The first statement  follows from the fact that  $x^{2}\equiv x\pmod{2}$, for any integer $x$. Since  $x_{1},\ldots,x_{t}\in \N$, we have $b^{2}\geq a\geq b$. The last inequality follows from the Cauchy-Schwarz inequality since
$$
b^{2}= (\sum_{i=1}^{t}x_{i})^{2} = (\sum_{i=1}^{t}x_{i}\cdot 1)^{2} \leq (\sum_{i=1}^{t}x_{i}^{2})(\sum_{i=1}^{t} 1^{2}) = ta.
$$
\end{proof}
\begin{lemma}
Let $m\in\N$. The integer $m$ admits  a $t$-{\it squared partition} only if $m\equiv 0 \;\mbox{or}\; 3 \pmod{4}$.
\end{lemma}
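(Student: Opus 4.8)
The plan is to leverage the representation $m = b^2 + 2a$ furnished by Lemma~\ref{cond}, together with the parity relation $a \equiv b \pmod{2}$ proved there, and then simply read off $m$ modulo $4$ according to the common parity of $a$ and $b$. The entire statement is a short congruence computation once that parity link is in hand.

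First I would invoke Lemma~\ref{cond}: since $m$ admits a $t$-squared partition, there exist $a,b\in\N$ with $m = b^2 + 2a$ and $a \equiv b \pmod{2}$. I want to stress that this parity coincidence is the crucial input; it originates from the identity $x^2 \equiv x \pmod{2}$ applied to $a = \sum_i x_i^2$ and $b = \sum_i x_i$, which forces $a$ and $b$ to share the same parity.

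Next I would split into the two cases dictated by that shared parity. If $b$ is even, then $a$ is even as well, so $b^2 \equiv 0 \pmod{4}$ and $2a \equiv 0 \pmod{4}$, whence $m \equiv 0 \pmod{4}$. If instead $b$ is odd, then $a$ is odd too; the square of an odd integer satisfies $b^2 \equiv 1 \pmod{4}$ (in fact $\equiv 1 \pmod 8$), while $2a \equiv 2 \pmod{4}$, so that $m = b^2 + 2a \equiv 3 \pmod{4}$. As these exhaust the possibilities, we conclude $m \equiv 0$ or $3 \pmod{4}$.

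There is no genuine obstacle here; the argument is a two-line case analysis. The only point worth emphasizing is \emph{why} the residues $1$ and $2$ are excluded: it is precisely the constraint $a \equiv b \pmod 2$ that rules out the mixed-parity combinations ($b$ odd with $a$ even, or $b$ even with $a$ odd), which would otherwise yield $m \equiv 1$ or $2 \pmod 4$. Thus the conclusion is not merely a numerical accident but a direct consequence of the structure of a $t$-squared partition.
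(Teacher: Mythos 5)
Your proof is correct and follows the same route as the paper: both invoke the representation $m = b^2 + 2a$ with $a \equiv b \pmod{2}$ from the preceding lemma and deduce the congruence on $m$. The only difference is that you spell out the two parity cases explicitly, while the paper leaves that short computation implicit.
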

\begin{proof}
If $m$ admits  a $t$-squared partition then it can be written as $m=b^{2} + 2a$, and since  $a\equiv b\pmod{2}$, we have the conditions on $m$.
\end{proof}
\begin{corollary}
 Let $m\in \N$.  If $m\equiv 1 \;\mbox{or}\; 2 \pmod{4}$ then the  frequency $f(m)=0$.
\end{corollary}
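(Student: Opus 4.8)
The plan is to deduce this statement as a direct contrapositive of the two preceding results, since the frequency $f(m)$ was defined as the cardinality of the fiber $P^{-1}(m)$, and hence $f(m)=0$ is equivalent to $m\notin \mathrm{Im}(P)$. First I would unwind this definition to reduce the claim to showing that no matrix $M\in\mathcal{M}$ satisfies $P(M)=m$ when $m\equiv 1$ or $2\pmod 4$.

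Next I would invoke Theorem \ref{Teo-sq}, which asserts that $m\in\mathrm{Im}(P)$ if and only if $m$ admits a $t$-squared partition. This converts the arithmetic question about the image of $P$ into a purely representation-theoretic one about writing $m$ in the form \eqref{part}. The key input, supplied by the immediately preceding lemma, is that a $t$-squared partition forces $m\equiv 0$ or $3\pmod 4$; the mechanism there is that $m=b^2+2a$ together with the parity relation $a\equiv b\pmod 2$ from Lemma \ref{cond} leaves only these two residues available modulo $4$.

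Taking the contrapositive, if $m\equiv 1$ or $2\pmod 4$ then $m$ admits no $t$-squared partition, whence $m\notin\mathrm{Im}(P)$ by Theorem \ref{Teo-sq}, and therefore $P^{-1}(m)=\emptyset$ and $f(m)=0$. There is no genuine obstacle here: the corollary is a bookkeeping consequence of the equivalence in Theorem \ref{Teo-sq} and the residue restriction already established, and the only point deserving explicit mention is that an empty fiber corresponds to frequency zero rather than to an undefined quantity, so that the conclusion $f(m)=0$ is meaningful.
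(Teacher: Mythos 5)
Your proof is correct and follows essentially the same route as the paper: the corollary is stated there without its own proof, precisely because it is the contrapositive of the preceding lemma (a $t$-squared partition forces $m\equiv 0$ or $3\pmod 4$) combined with Theorem \ref{Teo-sq} ($m\in Im(P)$ iff $m$ admits a $t$-squared partition), which is exactly your chain of reasoning. Your remark that the empty fiber gives $f(m)=0$ rather than an undefined value is a reasonable clarification, since the paper formally defines $f(m)$ only for $m\in Im(P)$.
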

\begin{theorem}
Let $m\in\N$, such that $m\in Im(P)$. Then $f(m)$ is equal to the number of  non-negative solutions $(c_{1},c_{2},\ldots, c_{b})$, assuming $c_{1}\geq c_{2}\geq \cdots \geq c_{b}\geq 0$,  of systems ot the type
\begin{equation}\label{Sys}
\left \{\begin{array}{l}
b= x_{1}+\cdots + x_{b} \\
a= x_{1}^{2}+\cdots + x_{b}^{2},
\end{array} \right .
\end{equation}
for any pair $a,b$ such that  $a\equiv b\pmod{2}$ and $m=b^{2}+ 2a$.
\end{theorem}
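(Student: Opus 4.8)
The plan is to reduce the statement to counting $t$-squared partitions of $m$, and then to realize each such partition as a zero-padded solution of the system \eqref{Sys}. First I would record that, because the Path Procedure ignores the entry $d_{1}$ (Remark \ref{R1}), the frequency $f(m)$ is correctly read as the number of matrices in $\mathcal{M}_{0}$ that the procedure sends to $m$; by Corollary \ref{Cor} these matrices are in bijection with the $t$-squared partitions of $m$. Hence $f(m)$ equals the total number of $t$-squared partitions of $m$, with $t$ allowed to vary, and it suffices to count these.

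Next I would organize the $t$-squared partitions of $m$ according to the pair $(a,b)$ they produce. By Definition \ref{t-sqrt} a $t$-squared partition is a multiset of positive integers $c_{1},\ldots,c_{t}$ with $b=c_{1}+\cdots+c_{t}$, $a=c_{1}^{2}+\cdots+c_{t}^{2}$, and $m=b^{2}+2a$; the values $a$ and $b$ are determined by the multiset, so each partition belongs to exactly one such pair $(a,b)$, and by Lemma \ref{cond} every pair arising this way satisfies $a\equiv b\pmod{2}$. Consequently the $t$-squared partitions of $m$ split, without overlap, into classes indexed by the admissible pairs $(a,b)$ with $m=b^{2}+2a$, so that $f(m)$ is the sum over these pairs of the number of partitions in each class.

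It then remains to count, for a fixed admissible pair $(a,b)$, the multisets $c_{1}\geq\cdots\geq c_{t}\geq 1$ with $\sum c_{i}=b$ and $\sum c_{i}^{2}=a$; equivalently, the partitions of $b$ whose parts have squares summing to $a$. The key observation is that any partition of $b$ into positive parts has at most $b$ parts, so $t\leq b$, and I can extend the sequence by trailing zeros to length exactly $b$. This produces a non-negative, weakly decreasing tuple $(x_{1},\ldots,x_{b})$ with $x_{1}+\cdots+x_{b}=b$ and $x_{1}^{2}+\cdots+x_{b}^{2}=a$, that is, a solution of \eqref{Sys}; deleting the trailing zeros recovers the original multiset, so the correspondence is a bijection. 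Summing these counts over all admissible pairs $(a,b)$ yields the stated formula.

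The step I expect to require the most care is the bookkeeping on the number of variables: one must check that fixing the length of the solution vector at $b$, rather than at the a priori unknown $t$, loses nothing. This rests on the elementary bound $t\leq b$, valid because the $t$ positive parts each exceed $0$ and sum to $b$; it guarantees that zero-padding to length exactly $b$ is always possible and injective, and the weakly decreasing convention on $(c_{1},\ldots,c_{b})$ ensures each multiset is counted once. A secondary point worth stating explicitly is the normalization of $f(m)$ as a count inside $\mathcal{M}_{0}$; without identifying matrices that differ only in $d_{1}$, the fibre $P^{-1}(m)$ would be infinite. Beyond these two observations the argument is a direct translation among the three descriptions --- matrices, $t$-squared partitions, and solutions of \eqref{Sys} --- already set up in the earlier sections.
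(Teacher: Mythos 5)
Your proposal is correct and follows essentially the same route as the paper's proof: both identify $f(m)$ with the number of matrices in $\mathcal{M}_{0}$ mapping to $m$, invoke Corollary \ref{Cor} to pass to $t$-squared partitions, and match those partitions with solutions of \eqref{Sys} by padding with or truncating trailing zeros (you go from partition to solution, the paper from solution to partition). Your write-up is somewhat more explicit on points the paper leaves implicit --- the bound $t\leq b$ that makes zero-padding possible, the disjointness of the classes indexed by the pairs $(a,b)$, and the normalization of $f(m)$ as a count inside $\mathcal{M}_{0}$ (without which the fibre $P^{-1}(m)$ would be infinite) --- but the underlying argument is the same.
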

\begin{proof} Given a non-negative solution $(c_{1},c_{2},\ldots, c_{b})$ with  $c_{1}\geq c_{2}\geq \cdots \geq c_{b}\geq 0$, we may assume that for some $t\geq 1$ we have $c_{t}\neq 0$ and $c_{t+1}=\cdots =c_{b}=0$. This shows that $m$ admits a $t$-squared partition, and therefore there is a matrix $M \in \mathcal{M}_{0}$ associated with this partition such that $P(M)=m$. By Corollary \ref{Cor}, we have that this relation is 1-1.  Since, by definition,  $f(m)$ represents the number of matrices   $M \in \mathcal{M}_{0}$  such that $P(M)=m$, the result follows.
\end{proof}
Let $m\in\N$ and let $\mathbb{A}(m)$ be the set of all these  non-negative solutions $(c_{1},c_{2},\ldots, c_{b})$, given in the theorem above, hence $f(m) = |\mathbb{A}(m)|$.  As seen above, to each element of $\mathbb{A}(m)$  corresponds  a $t$-squared partition of $m$, which in turns corresponds to a matrix $M\in\mathcal{M}_{0}$. Observe that $\ell(M) = b + c_{1}$, according to \eqref{part2} and \eqref{Sys}, for in this case  $d_{1}=0$. Now let $\mathbb{B}(m)$ be the subset of  $\mathbb{A}(m)$ of the solutions with $ b + c_{1}\leq n$.  Now we are ready for our main result.
\begin{theorem}[Main Theorem]
Let $n\in\N$. Then
$$
p(n) = \sum_{m=1}^{n^{2}-1} |\mathbb{B}(m)| + 1.
$$
\end{theorem}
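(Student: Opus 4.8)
The plan is to reduce the entire statement to counting the set $\mathcal{M}_{0}(n)$ and then to stratify that set by the value of the Path Procedure function $P$. By Lemma \ref{PP}, the elements of $\mathcal{M}_{0}(n)$ are in bijection with the unrestricted partitions of $n$ having at least two parts. Since the unique partition of $n$ into a single part (namely $n$ itself) corresponds to the single-column matrix $\binom{0}{n}$, which has $d_{1}=n\neq 0$ and hence lies outside $\mathcal{M}_{0}(n)$, we obtain immediately
\[ |\mathcal{M}_{0}(n)| = p(n) - 1. \]
Thus it suffices to prove that $\sum_{m=1}^{n^{2}-1} |\mathbb{B}(m)| = |\mathcal{M}_{0}(n)|$, and the claimed formula follows by adding $1$.

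Next I would partition $\mathcal{M}_{0}(n)$ according to the value $m = P(M)$. For a fixed $m$, the theorem preceding the statement identifies $f(m)$, the number of matrices $M \in \mathcal{M}_{0}$ with $P(M)=m$, with the number of admissible solutions $(c_{1},\dots,c_{b})$ of the system \eqref{Sys}. Under the correspondence of Corollary \ref{Cor}, the matrix attached to such a solution has $d_{1}=0$, so by \eqref{part2} its length is $\ell(M)=b+c_{1}$. Consequently, imposing the defining restriction $b+c_{1}\leq n$ of $\mathbb{B}(m)$ is \emph{exactly} the condition $\ell(M)\leq n$ that cuts $\mathcal{M}_{0}$ down to $\mathcal{M}_{0}(n)$. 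This gives the key fiber count
\[ |\mathbb{B}(m)| = \bigl|\{\, M \in \mathcal{M}_{0}(n) : P(M)=m \,\}\bigr|. \]

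It remains to check that the summation over $m$ need only run up to $n^{2}-1$. Every $M \in \mathcal{M}_{0}(n)$ satisfies $\ell(M)\leq n$, and I would verify the sharpened bound $P(M)\leq n^{2}-1$: writing $P(M)=b^{2}+2a$ with $a=\sum c_{i}^{2}\leq c_{1}\sum c_{i}=c_{1}b$ and using $b+c_{1}\leq n$, one gets $P(M)\leq b(b+2c_{1})\leq (n-c_{1})(n+c_{1}) = n^{2}-c_{1}^{2}\leq n^{2}-1$, since the monotonicity in \eqref{C} forces $c_{1}\geq c_{s-1}=d_{s}\geq 1$ for any matrix of $\mathcal{M}_{0}(n)$. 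Hence every fiber with $m\geq n^{2}$ is empty and nothing is lost by truncating the sum. Summing the fiber identity over $1\leq m \leq n^{2}-1$ therefore yields $\sum_{m} |\mathbb{B}(m)| = |\mathcal{M}_{0}(n)| = p(n)-1$, which is the desired formula.

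The genuinely delicate point is bookkeeping rather than any deep estimate: one must confirm that the single-part partition is the only element unaccounted for by $\mathcal{M}_{0}(n)$ (which produces the additive $+1$), and that the constraint $b+c_{1}\leq n$ defining $\mathbb{B}(m)$ matches $\ell(M)\leq n$ precisely because $d_{1}=0$ forces $\ell(M)=b+c_{1}$. The bound $m\leq n^{2}-1$ is the only step requiring a genuine inequality; note that the earlier lemma only gives $m\leq n^{2}$, so the strict refinement ruling out $m=n^{2}$ is exactly what legitimizes the finite range, and I expect it to be the main (though modest) obstacle to a fully rigorous write-up.
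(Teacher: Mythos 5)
Your proposal is correct and takes essentially the same route as the paper: Lemma \ref{PP} gives $|\mathcal{M}_{0}(n)| = p(n)-1$, the identification of $\mathbb{B}(m)$ with the fibers of $P$ over $\mathcal{M}_{0}(n)$ (via $\ell(M)=b+c_{1}$ when $d_{1}=0$) gives the sum, and the $+1$ accounts for the single-part partition. Your explicit inequality $P(M)\leq n^{2}-c_{1}^{2}\leq n^{2}-1$ justifying the truncation of the sum is a sound sharpening, though the paper already records the bound $m\leq n^{2}-1$ in Remark \ref{m<n2}.
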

\begin{proof}  It follows from the comments above that each solution in $\mathbb{B}(m)$ is associated with only one matrix in $\mathcal{M}_{0}(n)$, for  $ b + c_{1}\leq n$, therefore 
$$
|\mathcal{M}_{0}(n)|  = \sum_{m=1}^{n^{2}-1} |\mathbb{B}(m)|,
$$
now the result follows from Lemma \ref{PP}, adding 1 for the only partition of $n$ in exactly one part.
\end{proof}
\section*{Acknowledgements}
The authors would like to express their gratitute to Professors Robson da Silva and Eduardo Brietzke for their valuable comments on an early draft of this paper.  This  paper  was  written  while  the  first  author  enjoyed the hospitality of the Universidade de Campinas in S\~{a}o Paulo-Brazil,  supported  by a grant from CNPq (Conselho Nacional de Desenvolvimento Cient\'ifico e Tecnol\'ogico)-Brazil


\end{document}